% !TEX encoding = UTF-8 Unicode
% !TEX TS-program = pdflatexmk
% !BIB program = BibTeX

\documentclass[11pt, a4paper, reqno]{amsart}
\usepackage{amsmath, amsthm, amssymb, mathtools, microtype, xcolor}
\usepackage[utf8]{inputenc}

\theoremstyle{plain}
\newtheorem{theorem}{Theorem}
\newtheorem*{maintheorem}{Main Theorem}

\newtheorem{lemma}[theorem]{Lemma}
\newtheorem{corollary}[theorem]{Corollary}
\theoremstyle{definition}

\theoremstyle{remark}
\newtheorem{remark}[theorem]{Remark}

\def\R{\mathbb{R}}	
\renewcommand{\leq}{\leqslant} 		
\renewcommand{\geq}{\geqslant}

%%% local packages
%\usepackage{showkeys}
\usepackage[hidelinks]{hyperref}
\numberwithin{equation}{section}
\setcounter{tocdepth}{1} % eventually put 2

%%% local definitions
\def\cA{\mathcal{A}}
\def\hcA{\hat\cA}
\def\cB{\mathcal{B}}
\def\hcB{\hat\cB}

\def\cF{\mathcal{F}}
\def\hcF{\hat\cF}

\def\from{\colon}
\def\into{\hookrightarrow}
\def\onto{\twoheadrightarrow}

\usepackage{parskip}
\usepackage{enumerate}

\begin{document}

\title{On the structure of homogeneous local Poisson brackets}

\author{Guido Carlet}
\address
{Université Bourgogne Europe, CNRS, IMB UMR 5584, 21000 Dijon, France}
\email{guido.carlet@ube.fr}

\author{Matteo Casati}
\address
{School of Mathematics and Statistics,
Ningbo University,
818 Fenghua Road, Jiangbei District,
Ningbo City, Zhejiang Province, People's Republic of China}
\email{matteo@nbu.edu.cn}

\keywords{}
\subjclass[2010]{}

\date{\today}

\begin{abstract}
We consider an arbitrary Dubrovin-Novikov bracket of degree $k$, namely a homogeneous degree $k$ local Poisson bracket on the loop space of a smooth manifold $M$ of dimension $n$, and show that $k$ connections, defined by explicit linear combinations with constant coefficients of the standard connections associated with the Poisson bracket, are flat.
\end{abstract}

\maketitle

\tableofcontents

\section{Introduction}

In the early 1980s, Dubrovin and Novikov~\cite{dn83, dn84} introduced the problem of characterizing the structure and the deformations of homogeneous local Poisson brackets of degree $k$, which are since known as Dubrovin-Novikov brackets, differential-geometric Poisson brackets, or homogeneous Hamiltonian operators. In coordinates $u^1, \dots, u^n$ they can be written as
\begin{equation} \label{first}
\{u^i(x), u^j(y)\}^{[k]} = \sum_{s=0}^k P_l^{ij}(u, u_x, \dots ) \delta^{(s)}(x-y)
\end{equation}
with the coefficients $P_l^{ij}$ constrained by homogeneity in the number of $x$ derivatives, skew-symmetry, and Jacobi identity.

It is known since~\cite{n85} that a local homogeneous Poisson bracket should be studied as a finite-dimensional differential-geometric structure on a smooth manifold $M$. However, the geometric interpretation of the objects defined on $M$ by the coefficients $P_l^{ij}$ and of the constraints imposed on them by the skew-symmetry and the Jacobi identity is still not clear for an arbitrary degree $k$. The structure problem is that of a geometrically meaningful formulation of these objects on $M$, and of the differential constraints to which they are subject, aiming at the classification of homogeneous Poisson brackets on $M$. 

The importance of the homogeneous Poisson brackets is also due to the fact that they appear as leading term in the general form of dispersive Poisson brackets
\begin{equation}
\{ \cdot , \cdot \} = \sum_{l \geq k}  \{\cdot , \cdot\}^{[l]}
\end{equation}
where each homogeneous term $\{\cdot , \cdot\}^{[l]}$ is of the form~\eqref{first}. Higher degree terms can be regarded as a deformation of the leading-order homogeneous Poisson bracket of degree $k$. The basic structure describing such deformations is the Poisson cohomology of the leading homogeneous Poisson bracket. 

Despite several results in low degree $k$, for the most part limited to the case of $M=\R^n$ and often to small values of $n$, the structure and the deformation theory of homogeneous Poisson brackets of arbitrary degree $k$ remains mostly unknown. The only general result, proved by Doyle~\cite{d93}, is that the connection $\nabla^{(0)}$, associated with the coefficients of $u^{l,k} \delta(x-y)$ in~\eqref{first}, is symmetric and flat. 

The aim of this paper is to provide new insights on the structure of homogeneous Poisson brackets of arbitrary degree, laying the foundation to a sequence of works that we plan to devote to the study of their structure and deformations.
%of homogeneous Poisson brackets. 
A common approach in the study of the structure of such brackets for low $k$ is to look for a special coordinate system in which the form of the brackets is particularly simple;    for example, for general $k$, by writing them in the flat coordinates for the connection $\nabla^{(0)}$. However, the use of special coordinates can obfuscate, rather than clarify, the geometric meaning of the equations. The philosophy of this work is therefore to study the structure of homogeneous Poisson brackets in an arbitrary coordinate system and for arbitrary $k$. 

This work is organized as follows: in the next section we state our main theorem; in Section~3 we formulate an auxiliary cohomology problem which leads to the proof of our main result; in Section~4 we review the low-degree brackets in the light of our main theorem; in the last section we announce some results and list some open problems.

\begin{remark}
The structure of local homogeneous Poisson brackets has been thoroughly studied in low degree $k\leq3$. Let us review the available literature. The ultralocal case $k=0$ does not concern us here, since it does not involve any connection; it corresponds to the case of a Poisson structure on $M$, see~\cite{m98}. The structure of the hydrodynamic type Poisson brackets is described in~\cite{dn83}. The case of degree $k=2$ has been studied in~\cite{n85, p86, d93, m98, f08, f09}. The degree $k=3$ case has been investigated in~\cite{n85, p86, p91, d93, p97, m98, bp01, fpv14, fpv16}. The study of Poisson brackets of degree $k=1$ with degenerate $g^{ij}$ has been undertaken in~\cite{g85, m98}, and treated for $n=2,3$ in~\cite{s16}. 
\end{remark}

\begin{remark}
The theory of local Poisson brackets considered in this paper allows for several extensions. If the coordinates $u^1,\ldots,u^n$ are regarded as functions of several independent variables $x_1, \dots , x_D$ we say that the Poisson bracket is multidimensional. This generalization was introduced in~\cite{dn84} and studied, for example, in~\cite{m88,c14}. Moreover, it is possible to study brackets defined by pseudodifferential operators, extending the theory to the \emph{non-local} case, that has been considered for example in~\cite{mf90, f91, cfpv19}. Homological algebra methods used in this paper have originally been applied to the computation of bihamiltonian cohomology groups~\cite{cps16, cps17} and Poisson cohomology groups in the multidimensional case~\cite{ccs17}.
\end{remark}

\subsection*{Acknowledgments}
This work has been supported by: the EIPHI Graduate School (contract ``ANR-17-EURE-0002'', grant ``METTHHOD''), the region ``Bourgogne Franche-Comté'' (ANER grant ``FROBENIUS''), the LMS Research Grant (Scheme 4 – Research in Pair) ``Poisson cohomology of differential geometric Hamiltonian operators'', the National Science Foundation of China (Grants no.~12101341 and no.~12431008), Ningbo City Yongjiang Innovative Talent Program, and Ningbo University Talent Introduction and Research Initiation Fund.

\section{Statement of the main theorem}

Let $M$ be a smooth $n$-dimensional manifold whose formal loop space is endowed with a homogeneous local Poisson bracket of degree $k$. More precisely, on a chart $U$ with coordinates $u^1, \dots , u^n$ on $M$, the Poisson bracket is formally given by the expression
\begin{equation} \label{kbivect}
\{ u^i(x) ,u^j(y) \}  = \sum_{s=0}^k P_s^{ij} \delta^{(s)}(x-y) 
\end{equation}
where $P_s^{ij} \in \cA_{k-s}$. We denote by $\cA$ the algebra of differential polynomials, which on $U$ are given by formal power series in the variables $u^{i, s}$ with $1\leq i \leq n$, $s \geq 1$ with coefficients which are smooth functions of $u^1, \dots , u^n$. The standard degree $\deg$ is defined on $\cA$ by assigning the degree $s$ to the generators $u^{i, s}$. We denote by $\cA_d$ the homogeneous component of standard degree $d$. 
The derivation $\partial_x$ on $\cA$ is given by 
\begin{equation}
\partial_x = \sum_{s\geq0} u^{i, s+1} \frac{\partial }{\partial u^{i, s}} ,
\end{equation} 
where we pose $u^{i,0}=u^i$.
Under coordinate transformations $\tilde{u}^i=\tilde{u}^i(u)$, the formal transformation rule
\begin{equation}
\{ \tilde{u}^i(x) ,\tilde{u}^j(y) \}  = \frac{\partial \tilde{u}^i}{\partial u^{i'}}(u(x)) \{ u^{i'}(x) ,u^{j'}(y) \}  \frac{\partial \tilde{u}^j}{\partial u^{j'}}(u(y))
\end{equation}
is equivalent, using the usual identities for the derivatives of the Dirac delta function, to 
\begin{equation} \label{gen-tr-rule}
\tilde{P}^{ij}_s = \sum_{t\geq 0} \binom{s+t}{s} 
\frac{\partial \tilde{u}^i}{\partial u^{i'}} 
P^{i'j'}_{s+t} \partial_x^t \left( \frac{\partial \tilde{u}^j}{\partial u^{j'}} \right).
\end{equation}
This equation should be understood as an identity between differential polynomials that induces the transformation rules for their coefficients. We name some coefficients by letting
\begin{equation} \label{coeff-def}
\{ u^i(x) ,u^j(y) \} = g^{ij}(u(x)) \delta^{(k)}(x-y) + \sum_{s=0}^{k-1} h_{(s)l}^{ij}(u(x)) u^{l, k-s}(x) \delta^{(s)}(x-y) + \dots  
\end{equation}
where the dots denote higher order terms of degree two or more in the variables $u^{i, s}$ with $s\geq1$, $i=1, \dots ,n$. 

The coefficients $g^{ij}$ define	 a $(2,0)$-tensor $g$ on $M$, with symmetry
\begin{equation*}
g^{ji} = (-1)^{k+1} g^{ij}.
\end{equation*}
The skew-symmetry of the bracket ~\eqref{coeff-def} also implies the following constraints on the coefficients $h_{(s)l}^{ij}$
\begin{equation} \label{skewh}
h^{ij}_{(s)l} = \sum_{t=0}^{k-1} (-1)^{t+1} \binom{t}{s} h^{ji}_{(t)l}+\binom{k}{s} \frac{\partial g^{ij}}{\partial u^l} 
\end{equation}
for $0\leq s\leq k-1$.

We assume that $g$ is nondegenerate, thereby imposing that $n$ is even for $k$ even. The coefficients $h_{(s)l}^{ij}$ are known to transform (up to a multiplicative constant) as contravariant Christoffel symbols; more precisely
\begin{equation} \label{gammadef}
\Gamma_{(s)ij}^{l} = - \binom{k}{s}^{-1} g_{i i'} h_{(s)j}^{i'l}
\end{equation}
transform as Christoffel symbols. They define $k$ connections $\nabla^{(s)}$ on the tangent bundle to $M$, i.e. maps 
\begin{equation}
\nabla^{(s)} : \Gamma(TM) \to \Omega^1(TM) 
\end{equation}
for $s=0, \dots , k-1$, where $\Omega^1(TM)= \Gamma(T^*M \otimes TM)$.  We call these the standard connections.  

We define linear combinations with constant coefficients of the standard connections $\nabla^{(s)}$ as follows
\begin{equation}  \label{quadconn}
\nabla^{[s]} = \sum_{t=0}^{s} c_s^t \nabla^{(t)}
\end{equation}
where $s=0, \dots , k-1$ and the constants are given by
\begin{equation}
c_s^t = (-1)^t \binom{k+s-t}{k} \binom{k}{t}.
\end{equation}
Notice that these constants are nonzero only if $t \leq s$ and $t \leq k$. Since for each fixed $s$ the constants $c_s^t$ sum to one, the formula above defines connections $\nabla^{[0]}, \dots, \nabla^{[k-1]}$ on $TM$. 

While it is known~\cite{d93} that $\nabla^{(0)}$ is flat, the remaining standard connections $\nabla^{(1)}, \dots , \nabla^{(k-1)}$ in general are not flat (see Remarks~\ref{nonflat1} and~\ref{nonflat2}). Surprisingly, the flatness holds for all the connections $\nabla^{[s]}$, as stated in our main result.
\begin{maintheorem}
The connections $\nabla^{[0]}, \dots, \nabla^{[k-1]}$ are flat. 
\end{maintheorem}

The first few flat connections are:
\begin{align}
\nabla^{[0]}  &= \nabla^{(0)}, \\
\nabla^{[1]}  &= - k \nabla^{(1)} +(k+1)   \nabla^{(0)}, \\
\nabla^{[2]}  &= \frac{k(k-1)}2 \nabla^{(2)} -k(k+1)   \nabla^{(1)} + \frac{(k+2)(k+1)}2  \nabla^{(0)},\\
&\ \ \vdots \\
\nabla^{[k-1]}  &= -(-1)^k k \nabla^{(k-1)} + \dots + \binom{2k-1}{k}  \nabla^{(0)}.
\end{align}

\begin{remark}
We say that a degree $k$ Poisson bracket is generic if the affine space spanned by the standard connections (or equivalently by the flat connections) is $k-1$ dimensional, namely if they are in general position. The study of different types of degenerations could provide interesting families of Poisson brackets. 
\end{remark}

\section{Proof of the main theorem}

The main difficulty in dealing with the constraints imposed by the Jacobi identity for arbitrary degree $k$ is finding an appropriate way to disentangle the large family of associated differential equations, which, if dealt with directly, become unmanageable already for quite low $k$. Our strategy is to encode the Jacobi identity as $d \circ d =0$, where $d$ is the adjoint action of the Poisson bivector $P$ associated with the homogeneous Poisson bracket on the differential complex of local multivector fields. For our purposes, it is actually sufficient to consider the associated differential complex $(\hcA, D_P)$ introduced by Liu and Zhang~\cite{lz11, lz13}. These computations also form the basis of our future work on the cohomology of homogeneous Poisson brackets.

Let us consider the algebra $\hcA$ of formal power series in the even variables $u^{i, s}$ with $1\leq i \leq n$, $s \geq 1$ and in the odd variables $\theta^s_i$ with $1\leq i \leq n$, $s \geq 0$ with coefficients which are smooth functions of the coordinates  $u^1, \dots , u^n$ in a chart $U$ on $M$. 
Clearly $\cA$ is a subalgebra of $\hcA$ and the standard degree $\deg$ extends to $\hcA$ by assigning degree $s$ to the generators $\theta_i^s$. We denote by $\hcA_d$ the homogeneous component of standard degree $d$. The derivation $\partial_x$ on $\hcA$ is given by 
\begin{equation}
\partial_x = \sum_{s\geq0} \left[ u^{i, s+1} \frac{\partial }{\partial u^{i, s}} + \theta_i^{s+1} \frac{\partial }{\partial \theta_i^s}  \right] 
\end{equation} 
where we denote $u^{i,0} = u^i$. 
The theta degree $\deg_\theta$ on $\hcA$ assigns degree one to the variables $\theta_i^s$ and zero to the remaining generators. We denote by $\hcA^p$ the homogeneous component of theta degree $p$ and $\hcA^p_d = \hcA^p \cap \hcA_d$. 
Define $\hcF = \hcA / \partial_x \hcA$ and denote $\int : \hcA \to \hcF$ the corresponding projection. The degrees defined above on $\hcA$ induce corresponding degrees on $\hcF$, and we denote the homogeneous components of $\hcF$ with the obvious upper and lower indices.

With the homogeneous local Poisson bracket of degree $k\geq1$ defined by~\eqref{kbivect} we associate the following element $P$ in $\hcF^2_k$
\begin{equation}
P =\int \tilde{P}, \qquad \tilde{P} =  \frac12 \sum_{s=0}^k P_s^{ij} \theta_i  \theta_j^s
\end{equation}
which in turn defines a superderivation of $\hcA$  
\begin{equation} \label{dp}
D_P = \sum_{s\geq0} \left[ \partial_x^s\left( \frac{\delta P}{\delta\theta_i} \right) \frac{\partial }{\partial u^{i,s}}  +  \partial_x^s \left( \frac{\delta P}{\delta u^i }\right) \frac{\partial }{\partial \theta_i^s}  \right].
\end{equation}
with $\deg D_P = k$ and $\deg_\theta D_P =1$. The variational derivatives are defined as 
\begin{equation}
\frac{\delta P}{\delta u^i} = \sum_{s\geq 0} (-\partial_x)^s \frac{\partial \tilde P}{\partial u^{i,s}}, \qquad \frac{\delta P}{\delta \theta_i} = \sum_{s\geq 0} (-\partial_x)^s \frac{\partial \tilde P}{\partial \theta^{s}_i} .
\end{equation}
The operator $D_P$ defines a differential on the complex $\hcA$.
\begin{lemma}[\cite{lz11, lz13}] 
The formula~\eqref{dp} defines a superderivation $D_P$ of $\hcA$ which squares to zero, i.e.
\begin{equation}
D_P \circ D_P = 0.
\end{equation}
\end{lemma}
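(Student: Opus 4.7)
The plan is to recognize $D_P$ as the adjoint action of $P$ with respect to a Schouten-type bracket on $\hcF$, and then deduce $D_P \circ D_P = 0$ from the Jacobi identity for the original Poisson bracket. First I would introduce a variational Schouten bracket on $\hcF$ by the formula
\begin{equation*}
\{F, G\} = \int \left( \frac{\delta F}{\delta\theta_i} \frac{\delta G}{\delta u^i} + (-1)^{|F|} \frac{\delta F}{\delta u^i} \frac{\delta G}{\delta \theta_i} \right),
\end{equation*}
where $|F|$ denotes the $\theta$-degree modulo $2$, and verify that it is well-defined on $\hcA/\partial_x \hcA$, graded-antisymmetric, and satisfies the graded Jacobi identity. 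Well-definedness and antisymmetry reduce to routine integration-by-parts manipulations on variational derivatives; graded Jacobi is the main computational input of the construction.

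Next I would show that the operation $\{P, \cdot\}$ on $\hcF$ extends uniquely to a superderivation of $\hcA$ that commutes with $\partial_x$, and that this extension coincides with $D_P$ as defined in~\eqref{dp}. Since both are superderivations of $\theta$-degree one commuting with $\partial_x$, it suffices to check agreement on the generators $u^{i,s}$ and $\theta_i^s$, which is immediate from the formulas for the variational derivatives: $D_P(u^{i,s}) = \partial_x^s(\delta P/\delta \theta_i)$ matches $\{P, u^{i,s}\}$, and similarly for $\theta_i^s$.

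The key translation step is to rewrite the Jacobi identity for the local bracket~\eqref{kbivect}, an identity of triple distributional expressions in three variables, as the single equation $\{P, P\} = 0$ in $\hcF$. This is the classical dictionary between the Schouten bracket of local multivectors on the formal loop space and the variational bracket in the $\theta$-formalism; concretely, expanding the coefficient of $\theta_i \theta_j^r \theta_l^s$ in $\{P,P\}$ reproduces exactly the homogeneous components of the Jacobi condition on the $P^{ij}_s$. Once this is established, applying graded Jacobi to $\{P, \{P, F\}\}$ for any $F \in \hcA$, using $\deg_\theta P = 2$ even, yields $D_P^2 F = \tfrac12 \{\{P, P\}, F\} = 0$; and since $D_P^2$ is a superderivation of $\hcA$, checking vanishing on generators (namely on representatives in $\hcA$ of the classes generating $\hcF$) is sufficient.

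The main obstacle will be the verification of graded Jacobi for the variational bracket on $\hcF$, which requires nontrivial integration-by-parts combined with careful sign tracking due to the odd variables $\theta_i^s$; a direct brute-force calculation is unwieldy, so one would likely organize it by introducing formal currents and exploiting the master-equation form $\{P,P\} \equiv 2 D_P P$, which reduces graded Jacobi on three arguments to a formal identity among iterated variational derivatives.
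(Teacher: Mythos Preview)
Your outline is correct and is essentially the argument given in the cited references~\cite{lz11, lz13}. Note, however, that the present paper does \emph{not} supply its own proof of this lemma: it simply imports the statement from Liu--Zhang, so there is nothing in the paper to compare your approach against. What you have sketched---the variational Schouten bracket on $\hcF$, the identification $D_P = \{P,\cdot\}$ as the unique superderivation of $\hcA$ commuting with $\partial_x$ that extends the adjoint action, the translation of the distributional Jacobi identity into $\{P,P\}=0$, and the conclusion $D_P^2 = \tfrac12\{\{P,P\},\cdot\}=0$ via graded Jacobi---is precisely the Liu--Zhang construction. One small clarification: to pass from $D_P^2=0$ on $\hcF$ to $D_P^2=0$ on $\hcA$, you should argue that $D_P^2$ is a superderivation of $\hcA$ commuting with $\partial_x$, hence it suffices to check vanishing on the undifferentiated generators $u^i$ and $\theta_i$, where it coincides with the action on $\hcF$; your last paragraph gestures at this but conflates $\hcA$ and $\hcF$ slightly.
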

The previous equation encodes the differential equations appearing in the Jacobi identity for~\eqref{kbivect}. To disentangle them we proceed to compute the cohomology of the complex $(\hcA, D_P)$, using spectral sequences. For a general introduction to this technique we refer for example to~\cite{m01}. A quick review can be found in Section 3 of~\cite{cps16}.
 
While the cohomology computations in general depend on the manifold $M$, in this context we are only interested in simplifying the operator $D_P$, therefore it is sufficient to consider the algebra $\hcA$ on a chart $U$ of $M$, that for simplicity we consider homeomorphic to a ball in $\R^n$. 

Let $\deg_u$ be the degree on $\hcA$ defined by imposing that $\deg_u u^{i,s} =1$ if $s >0$, and that the degree of all other generators is zero. We denote by $[.]_p$ the projection to the homogeneous component of $\deg_u$ equal to $p$. 
Denoting some coefficients in $P$ as in~\eqref{coeff-def}, we find the following preliminary formulas for some of their homogeneous components in degree $\deg_u$. 
\begin{lemma}
The following equations hold:
\begin{align}
[\tilde P]_0 &= \frac12 g^{ij} \theta_i \theta_j^k, \\
[\tilde P]_1 &= \frac12 \sum_{s=0}^{k-1} h_{(s) l}^{ij} u^{l, k-s} \theta_i \theta_j^{s}, \\
\left[  \frac{\delta P}{\delta \theta_i} \right]_0 
&=g^{ij} \theta_j^k,  \label{dpdt} \\
\left[  \frac{\delta P}{\delta u^i} \right]_0 
&= \frac12 \frac{\partial g^{lj}}{\partial u^i}  \theta_l \theta_j^k 
+ \frac12 \sum_{s=0}^{k-1} (-1)^{k-s} h_{(s)i}^{lj} \partial_x^{k-s} \left( \theta_l \theta_j^s \right).  \label{dpdu}
\end{align}
\end{lemma}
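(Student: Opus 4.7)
The plan is to read the first two identities directly off~\eqref{coeff-def} and to reduce the formulas for $\delta P/\delta\theta_i$ and $\delta P/\delta u^i$ to short finite computations using a canonical splitting of $\partial_x$ by $\deg_u$.

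I would first note that $P^{ij}_s\in\cA_{k-s}$ has trivial $\deg_u=0$ part whenever $s<k$, because any monomial $u^{i_1,s_1}\cdots u^{i_m,s_m}$ of standard degree $k-s>0$ must contain at least one generator with $s_j>0$; on the other hand $[P^{ij}_k]_0=g^{ij}$. The linear-in-$\deg_u$ part is read off from~\eqref{coeff-def} as $[P^{ij}_s]_1=h^{ij}_{(s)l}u^{l,k-s}$ for $s<k$. Substituting these facts into $\tilde P=\tfrac12\sum_s P^{ij}_s\theta_i\theta_j^s$ and using that the odd generators carry $\deg_u=0$ immediately gives the claimed expressions for $[\tilde P]_0$ and $[\tilde P]_1$.

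The main tool for the variational derivatives is the decomposition $\partial_x=\partial_x^{(0)}+\partial_x^{(1)}$, where $\partial_x^{(0)}=\sum_{s\geq 1}u^{i,s+1}\partial/\partial u^{i,s}+\sum_{s\geq 0}\theta_i^{s+1}\partial/\partial\theta_i^s$ preserves $\deg_u$ while $\partial_x^{(1)}=u^{i,1}\partial/\partial u^i$ raises it by one. Consequently $[\partial_x^r X]_0=(\partial_x^{(0)})^r[X]_0$, and for any $\deg_u=0$ element of the form $h(u)\,\omega(\theta)$ one has $(\partial_x^{(0)})^r(h\omega)=h\,\partial_x^r\omega$ because $\partial_x^{(0)}$ kills all functions of $u$. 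I would then apply this to $\delta P/\delta\theta_i=\sum_{r\geq 0}(-\partial_x)^r\partial_{\theta_i^r}\tilde P$: the only $\deg_u=0$ contributions come from the $s=k$ summand $\tfrac12 g^{jl}\theta_j\theta_l^k$ of $\tilde P$, since $[P^{jl}_s]_0=0$ for $s<k$. At $r=0$ this yields $\tfrac12 g^{ij}\theta_j^k$, while at $r=k$ one gets $(-\partial_x)^k(-\tfrac12 g^{ji}\theta_j)=\tfrac{(-1)^{k+1}}{2} g^{ji}\theta_j^k$, which becomes another $\tfrac12 g^{ij}\theta_j^k$ after invoking the symmetry $g^{ji}=(-1)^{k+1}g^{ij}$. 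Their sum is $g^{ij}\theta_j^k$.

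The computation of $[\delta P/\delta u^i]_0$ proceeds analogously. The $r=0$ term sees only the $u$-dependence of the coefficients, so only $P^{ij}_k=g^{ij}$ survives and contributes $\tfrac12(\partial g^{lj}/\partial u^i)\theta_l\theta_j^k$. For $r\geq 1$, extracting $[\partial_{u^{i,r}}\tilde P]_0$ requires a term of $\tilde P$ linear in $u^{i,r}$, and these are precisely captured by $[\tilde P]_1$; the generator $u^{i,r}$ appears there exactly for $r=k-s$ with $s=0,\dots,k-1$, giving $[\partial_{u^{i,k-s}}\tilde P]_0=\tfrac12 h^{lj}_{(s)i}\theta_l\theta_j^s$. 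Applying $(-\partial_x)^{k-s}$ and the identity for $\deg_u=0$ elements produces $\tfrac12\sum_{s=0}^{k-1}(-1)^{k-s}h^{lj}_{(s)i}\partial_x^{k-s}(\theta_l\theta_j^s)$, completing the formula. The only real obstacle is sign bookkeeping: the odd derivatives $\partial/\partial\theta_i^r$ pick up minus signs when passing through $\theta_j$, the symmetry of $g$ depends on the parity of $k$, and the alternating signs coming from $(-\partial_x)^r$ must combine consistently; once the $\deg_u$-splitting is in place, everything reduces to a finite, explicit check.
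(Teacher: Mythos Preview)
Your proof is correct and follows essentially the same approach as the paper. The paper states the key reductions more tersely as $\left[\delta P/\delta\theta_i\right]_0=\left[\delta[\tilde P]_0/\delta\theta_i\right]_0$ and $\left[\delta P/\delta u^i\right]_0=\left[\delta[\tilde P]_0/\delta u^i\right]_0+\left[\delta[\tilde P]_1/\delta u^i\right]_0$, leaving the remaining computation implicit; your explicit splitting $\partial_x=\partial_x^{(0)}+\partial_x^{(1)}$ by $\deg_u$ is precisely the mechanism that justifies those reductions, and your term-by-term evaluation (including the use of $g^{ji}=(-1)^{k+1}g^{ij}$ to combine the $r=0$ and $r=k$ contributions in~\eqref{dpdt}) simply unpacks what the paper calls ``easily follows.''
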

\begin{proof}
The first two are straightforward computations. For the third one observe that
\begin{equation}
\left[  \frac{\delta P}{\delta \theta_i} \right]_0 = \left[  \frac{\delta [\tilde P]_0}{\delta \theta_i} \right]_0,
\end{equation}
while for the fourth one 
\begin{equation}
\left[  \frac{\delta P}{\delta u^i} \right]_0 =\left[  \frac{\delta [\tilde P]_0}{\delta u^i} \right]_0 +\left[  \frac{\delta [\tilde P]_1}{\delta u^i} \right]_0,
\end{equation}
from which the above expressions easily follow.
\end{proof}

Let us decompose the differential $D_P = D_{-1} + D_0 + \dots$ in homogeneous components of degree $\deg_u D_s= s$. 
\begin{lemma}
The lowest degree homogeneous component of $D_P$  is given by
 \begin{equation}
D_{-1} = \sum_{s\geq 1} g^{ij} \theta_j^{k+s} \frac{\partial }{\partial u^{i,s}} . 
\end{equation}
\end{lemma}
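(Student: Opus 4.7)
The plan is to extract the lowest-$\deg_u$ component of $D_P$ directly from its defining formula, using the expressions for $[\delta P/\delta\theta_i]_0$ and $[\delta P/\delta u^i]_0$ from the preceding lemma. The whole argument is a degree-counting exercise once the $\deg_u$-weights of the various building blocks are sorted out.

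First I will assign $\deg_u$-weights to all the relevant derivations. Since $u^i = u^{i,0}$ and $\theta_i^s$ have $\deg_u = 0$ while $u^{i,s}$ for $s\ge 1$ has $\deg_u = 1$, the partial derivatives $\partial/\partial u^i$ and $\partial/\partial\theta_i^s$ preserve $\deg_u$, whereas $\partial/\partial u^{i,s}$ for $s\ge 1$ lowers it by one. Inspecting the formula~\eqref{dp}, the second sum contributes only elements of non-negative $\deg_u$ and can therefore not appear in $D_{-1}$; similarly, the $s=0$ term of the first sum has $\deg_u \ge 0$. Hence the only contributions to $D_{-1}$ come from the terms $\partial_x^s(\delta P/\delta\theta_i)\,\partial/\partial u^{i,s}$ with $s\ge 1$, and from them only the $\deg_u=0$ part of $\partial_x^s(\delta P/\delta\theta_i)$ survives.

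Next I will split $\partial_x = \partial_x' + u^{j,1}\partial/\partial u^j$, where
\begin{equation*}
\partial_x' = \sum_{s\ge 1} u^{i,s+1}\frac{\partial}{\partial u^{i,s}} + \sum_{s\ge 0} \theta_i^{s+1}\frac{\partial}{\partial \theta_i^s}
\end{equation*}
preserves $\deg_u$ and the remaining summand raises it by one. By~\eqref{dpdt} one has $[\delta P/\delta\theta_i]_0 = g^{ij}\theta_j^k$; since $\partial_x'$ annihilates $g^{ij}$ (a function of $u^l$ only) and shifts $\theta_j^r \mapsto \theta_j^{r+1}$, an immediate induction gives $[\partial_x^s(g^{ij}\theta_j^k)]_0 = g^{ij}\theta_j^{k+s}$ for every $s\ge 0$.

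Substituting this back into the surviving terms yields
\begin{equation*}
D_{-1} = \sum_{s\ge 1} g^{ij}\theta_j^{k+s}\frac{\partial}{\partial u^{i,s}},
\end{equation*}
as stated. There is no real obstacle here: the one subtlety is keeping track of which pieces of $\partial_x$ preserve $\deg_u$, so that one does not mistakenly pick up extra contributions from the $u^{j,1}\partial/\partial u^j$ piece when projecting $\partial_x^s(\delta P/\delta\theta_i)$ to its $\deg_u=0$ component.
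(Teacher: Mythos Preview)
Your proof is correct and follows essentially the same approach as the paper's: both isolate the $s\geq 1$ terms of the first sum in~\eqref{dp} as the only possible source of a $\deg_u=-1$ contribution, and then reduce $\left[\partial_x^s(\delta P/\delta\theta_i)\right]_0$ to $\left[\partial_x^s\left[\delta P/\delta\theta_i\right]_0\right]_0$ using that $\partial_x$ cannot lower $\deg_u$. Your explicit splitting $\partial_x=\partial_x'+u^{j,1}\partial/\partial u^j$ is just a more detailed way of carrying out the last step, which the paper leaves implicit.
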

\begin{proof}
Observe that
\begin{equation}
D_{-1}  = [D_P]_{-1} = \sum_{s\geq1}  \left[ \partial_x^s\left(\frac{\delta P}{\delta \theta_i} \right) \right]_0 \frac{\partial }{\partial u^{i,s}} 
= \sum_{s\geq1}  \left[ \partial_x^s\left[\frac{\delta P}{\delta \theta_i} \right]_0 \right]_0 \frac{\partial }{\partial u^{i,s}}
\end{equation}
which, after substituting~\eqref{dpdt}, gives the desired result.
\end{proof}

We consider on the complex $(\hcA, D_P)$ the compatible descending filtration $F \hcA$ induced by the degree $\deg_u + \deg_\theta$. More explicitly, $F^p \hcA^q$ includes the monomials with $\deg_\theta$ equal to $q$ and $\deg_u$ bigger or equal to $p-q$. Therefore $D_P$ maps $F^p \hcA^q$ to $F^p \hcA^{q+1}$. Let us denote by $(E_r^{pq}, d_r)_{r\geq0}$ the associated spectral sequence. 

On the page zero of the spectral sequence 
\begin{equation}
E_0^{pq} = \frac{F^p \hcA^{p+q}}{F^{p+1} \hcA^{p+q}} \simeq [\hcA^{p+q}]_{-q}
\end{equation}
the differential $D_P$ induces the differential $d_0: E_0^{p,q} \to E_0^{p,q+1}$, which can be identified with $D_{-1}$ acting on $\hcA$. Let us compute its cohomology. 

Let $\hcB$ be the ring of polynomials in the odd variables $\theta^s_i$ for $0 \leq s \leq k$ and  $1 \leq  i \leq n$ with coefficients given by smooth functions in the variables $u^1, \dots , u^n$:
\begin{equation}
\hcB = C^\infty(U) \left[ \{ {\theta}_i^s, \ 0 \leq s \leq k, \ 1 \leq i \leq n \}  \right].
\end{equation}

Let $i_{\hcB} : \hcB \into \hcA$ be the inclusion map and 
$\pi_{\hcB} \from \hcA \onto \hcB$ 
the projection which sends to zero the generators $u^{i,s}$ and $\theta_i^{s+k}$ for $s\geq1$, $1 \leq  i \leq n$. Clearly $\pi_{\hcB} \circ i_{\hcB} = 1_{\hcB}$, while $i_{\hcB} \circ \pi_{\hcB}$ is homotopic to the identity map $1_{\hcA}$. The cochain homotopy map $h_{\hcA}$ is the usual  one in de Rham theory, which in this case can be written as 
\begin{equation}
h_{\hcA} = \frac1l \sum_{s\geq 1} u^{i,s} g_{ji} \frac{\partial }{\partial \theta^{k+s}_j }
\end{equation}
on monomials with the degree $l\geq1$ in the generators $u^{i,s}$ and $\theta_i^{s+k}$ for $s\geq1$, $1 \leq  i \leq n$, while $h_{\hcA}$ is zero on monomials with degree $l=0$. We have that
\begin{equation}
D_{-1} \circ h_{\hcA} + h_{\hcA} \circ D_{-1} = 1_{\hcA} - i_{\hcB} \circ \pi_{\hcB} .
\end{equation} 

We obtain the following formal Poincaré lemma.
\begin{lemma}
The cohomology $H(\hcA, D_{-1})$ is isomorphic to $\hcB$ .
\end{lemma}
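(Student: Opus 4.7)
The proof of this Poincaré-type lemma reduces to extracting cohomological consequences from the chain homotopy identity $D_{-1}\circ h_{\hcA} + h_{\hcA}\circ D_{-1}=1_{\hcA}-i_{\hcB}\circ\pi_{\hcB}$ already stated in the paragraph preceding the lemma. My plan is to package this into a deformation-retract / quasi-isomorphism argument between $(\hcA,D_{-1})$ and $(\hcB,0)$.

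First I would observe that $D_{-1}$ restricts trivially to $\hcB$: an element of $\hcB$ is a function of $u^1,\dots,u^n$ and the $\theta_i^s$ with $0\le s\le k$, hence contains no generator $u^{i,s}$ with $s\ge 1$, so the derivation $D_{-1}=\sum_{s\ge 1}g^{ij}\theta_j^{k+s}\partial/\partial u^{i,s}$ kills it. Thus $(\hcB,0)$ is a subcomplex with zero differential and trivially $H(\hcB)=\hcB$. Next I would verify that $i_{\hcB}$ and $\pi_{\hcB}$ are chain maps: $i_{\hcB}$ is because $D_{-1}|_{\hcB}=0$, and $\pi_{\hcB}\circ D_{-1}=0$ because every term produced by $D_{-1}$ carries a factor $\theta_j^{k+s}$ with $s\ge 1$, which is sent to $0$ by $\pi_{\hcB}$.

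With these preliminaries in hand, the proof amounts to three standard lines. The identity $\pi_{\hcB}\circ i_{\hcB}=1_{\hcB}$ is immediate from the definitions, so $i_{\hcB}$ and $\pi_{\hcB}$ descend to mutually inverse maps on cohomology provided $i_{\hcB}\circ\pi_{\hcB}$ induces the identity on $H(\hcA,D_{-1})$. But this is exactly the content of the homotopy identity: $i_{\hcB}\circ\pi_{\hcB}$ differs from $1_{\hcA}$ by $-(D_{-1}h_{\hcA}+h_{\hcA}D_{-1})$, hence is chain homotopic to the identity. Therefore $H(\hcA,D_{-1})\cong H(\hcB,0)=\hcB$.

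The only real step requiring work is the verification of the homotopy identity itself, but that is already asserted in the paragraph preceding the lemma and is a direct computation: on a monomial of degree $l\ge 1$ in the generators $u^{i,s}$ and $\theta_i^{s+k}$ with $s\ge 1$, the Euler-type operator $\sum_{s\ge 1}(u^{i,s}g_{ji}\partial/\partial\theta_j^{k+s})\cdot D_{-1}+D_{-1}\cdot(\ldots)$ gives back $l$ times the monomial (using $g_{ji}g^{jm}=\delta_i^m$), which after division by $l$ yields $1_{\hcA}$; on a monomial of degree $0$ both sides of the homotopy identity vanish and the complementary term $i_{\hcB}\circ\pi_{\hcB}$ accounts for that contribution. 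So the anticipated obstacle is purely bookkeeping, and I would expect no conceptual difficulty beyond carefully tracking signs and the Euler weight $l$ in the homotopy formula.
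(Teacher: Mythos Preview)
Your proposal is correct and follows essentially the same approach as the paper: the paper states the homotopy identity $D_{-1}\circ h_{\hcA}+h_{\hcA}\circ D_{-1}=1_{\hcA}-i_{\hcB}\circ\pi_{\hcB}$ in the paragraph preceding the lemma and remarks afterward that this is the usual de~Rham contractibility argument, while you simply make explicit the standard deformation-retract consequences (chain-map verification for $i_{\hcB}$, $\pi_{\hcB}$ and the Euler-operator check of the homotopy). There is nothing to add beyond what you wrote.
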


Notice that the lemma follows essentially from the contractibility of the fibers of the jet space, with coordinates $u^{i, s}$, $s\geq 1$, once we identify $D_{-1}$ with a de~Rham type operator. 

The differential $D_P$ induces a differential $d_1 \from E_1^{p,q} \to E_1^{p+1,q}$ on the first page of the spectral sequence $E_1 \simeq \hcB$. On $\hcB$ it is represented by
\begin{equation}
d_1 = \pi_{\hcB} \circ D_0 \circ i_{\hcB}.
\end{equation}
Removing the terms in $D_P$ that act on $u^{i,s}$ or on $\theta_i^{k+s}$ with $s>0$ we obtain
\begin{align}
d_1 &= \pi_{\hcB} \circ \left[  
\frac{\delta P}{\delta \theta_i} \frac{\partial }{\partial u^i} 
+ \sum_{s=0}^k \partial^s \left( \frac{\delta P}{\delta u^i} \right) \frac{\partial }{\partial \theta_i^s} \right]_0  
  \circ i_{\hcB} \\
&= \pi_{\hcB} \circ \left( 
\left[
\frac{\delta P}{\delta \theta_i}  \right]_0 \frac{\partial }{\partial u^i} 
+ \sum_{s=0}^k \left[ \partial^s \left( \left[ \frac{\delta P}{\delta u^i} \right]_0 \right) \right]_0 \frac{\partial }{\partial \theta_i^s}  \right)
  \circ i_{\hcB} .
\end{align}
By substitution of~\eqref{dpdt} and~\eqref{dpdu}, we have the following:
\begin{lemma}
The first page $E_1$ of the spectral sequence is isomorphic to the space $\hcB$ with differential 
\begin{align} \label{d1}
d_1 &= g^{ij} \theta^k_j \frac{\partial }{\partial u^i}   
-\frac12 \frac{\partial g^{ij}}{\partial u^l} \theta_j^k  \sum_{s=0}^k \theta_i^s \frac{\partial }{\partial \theta_l^s} \\ \notag
&+\frac12 \sum_{0\leq s \leq r \leq k} \left[ \sum_{t=0}^{k-1} (-1)^{k-t} \binom{k+s-t}{r} h_{(t)l}^{ij}  \right]\theta^r_i \theta_j^{k+s-r} \frac{\partial }{\partial \theta_l^s} .
\end{align}
\end{lemma}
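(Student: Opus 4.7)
The plan is to start from the displayed expression for $d_1$ as $\pi_\hcB \circ (\cdots) \circ i_\hcB$ immediately preceding the statement, substitute the explicit formulas \eqref{dpdt} and \eqref{dpdu} from the previous lemma, and simplify. The isomorphism $E_1 \simeq \hcB$ is already a consequence of the formal Poincaré lemma, so the task reduces to three essentially algebraic computations: one for the contribution of $[\delta P/\delta \theta_i]_0$, and two for the two summands of $[\delta P/\delta u^i]_0$.

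For $[\delta P/\delta \theta_i]_0 = g^{ij}\theta_j^k$ the coefficient already lies in $\hcB$, so the first piece of $d_1$ is immediately $g^{ij}\theta_j^k \tfrac{\partial}{\partial u^i}$, matching the first term of \eqref{d1}. For the summand $\tfrac12 \tfrac{\partial g^{lj}}{\partial u^i}\theta_l \theta_j^k$ of $[\delta P/\delta u^i]_0$, I expand $\partial_x^s$ via Leibniz; any term in which some $\partial_x$ hits the coefficient $\partial g^{lj}/\partial u^i$ contains a factor $u^{m,r}$ with $r \geq 1$ and is killed by $[\cdot]_0$, so only the distributions of all $s$ derivatives among the two $\theta$'s contribute, producing $\tfrac12 \tfrac{\partial g^{lj}}{\partial u^i}\sum_{a=0}^{s}\binom{s}{a}\theta_l^a \theta_j^{k+s-a}$. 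Applying $\pi_\hcB$ eliminates $\theta_j^m$ with $m > k$, which forces $a = s$; after anticommuting $\theta_l^s$ past $\theta_j^k$ (picking up a minus sign) and relabeling dummy indices, this yields the second term of \eqref{d1}.

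The third piece $\tfrac12 \sum_{t=0}^{k-1}(-1)^{k-t}h_{(t)i}^{lj}\partial_x^{k-t}(\theta_l\theta_j^t)$ is handled similarly: since $h_{(t)i}^{lj}$ is a function of $u^1,\dots,u^n$ alone, $\partial_x^s$ either hits it (producing $u^{m,r}$ factors killed by $[\cdot]_0$) or passes through, so the surviving contribution is $\partial_x^{s+k-t}(\theta_l \theta_j^t) = \sum_{a=0}^{s+k-t}\binom{s+k-t}{a}\theta_l^a\theta_j^{s+k-a}$ after Leibniz expansion. The projection $\pi_\hcB$ cuts the range to $s \leq a \leq k$, and after writing $a = r$ and renaming indices to match the orientation of the claimed formula, we recover the double sum in \eqref{d1}. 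I do not expect a conceptual obstacle; the main care required is in the bookkeeping, namely tracking the superalgebra sign when reordering anticommuting $\theta$'s and correctly assembling the binomial coefficients $\binom{s+k-t}{r}$ produced by the Leibniz expansions.
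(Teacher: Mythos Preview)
Your proposal is correct and follows precisely the route the paper takes: the paper's own argument is the single sentence ``By substitution of~\eqref{dpdt} and~\eqref{dpdu}'', and what you outline is exactly that substitution carried out in detail, with the Leibniz expansions, the $\deg_u=0$ projection killing any term where $\partial_x$ hits a coefficient in $C^\infty(U)$, and the cutoff from $\pi_{\hcB}$ restricting the surviving $\theta$-indices to $\{0,\dots,k\}$. The index relabellings and the single anticommutation sign you flag are the only bookkeeping subtleties, and you have them right.
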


\begin{remark}
We introduce the notation
\begin{equation} \label{handg}
h^{ij}_{(k) l} = \frac{\partial g^{ij}}{\partial u^l},
\end{equation} 
so that the formula~\eqref{d1} can be rewritten in the more compact form
\begin{equation}
d_1 = g^{ij} \theta^k_j \frac{\partial }{\partial u^i}  
+\frac12 \sum_{\substack{0\leq s \leq r \leq k \\ 0\leq t \leq k} }
 (-1)^{k-t} \binom{k+s-t}{r} h_{(t)l}^{ij} 
\theta^r_i \theta_j^{k+s-r} \frac{\partial }{\partial \theta_l^s} .
\end{equation} 
\end{remark}

To obtain the second page $E_2$ of the spectral sequence we need to compute the cohomology of the complex $(\hcB, d_1)$. 

Let us now consider the degree $\deg_{\theta^k}$ on $\hcB$ defined by assigning degree one to the variables $\theta^k_1, \dots \theta_n^k$ and zero to the remaining ones. It turns out that the operator $d_1$ is concentrated in degrees $0$ and $1$.  
We collect in the following lemma some observations about the form of $d_1$.
\begin{lemma} \label{lemma10}
The operator $d_1$ on $\hcB$  has only two homogeneous components
\begin{equation} 
d_1 = d_1^{(1)} + d_1^{(0)},
\end{equation}
with $\deg_{\theta^k} d_1^{(s)} = s$.
The operator $d_1^{(1)}$ can be written
\begin{multline} \label{d11-1}
d_1^{(1)}  = g^{ij} \theta^k_j \frac{\partial }{\partial u^i}   
+\frac12 \sum_{t=0}^{k}  (-1)^{k-t}  \binom{2k-t}{k} h_{(t)l}^{ij} \theta_i^k \theta_j^k \frac{\partial }{\partial \theta_l^k} \\
+\sum_{0\leq s ,t \leq k-1}  
(-1)^k \binom{k}{t}^{-1}
c_s^t h_{(t)l}^{ij}
\theta^k_i \theta_j^{s} \frac{\partial }{\partial \theta_l^s} 
\end{multline}
where
\begin{equation} \label{cts}
c_s^t = (-1)^t \binom{k+s-t}{k} \binom{k}{t}.
\end{equation}
The constants $c_s^t$, for $0 \leq s, t \leq k-1$,  form a lower triangular matrix $c$, i.e. $c_s^t=0$ if $s < t$, and satisfy $\sum_{t=0}^{k-1} c_s^t = 1$. The inverse $c^{-1}$ is a lower triangular matrix given by 
\begin{equation}
(c^{-1})_s^t = (-1)^{t} \binom{k}{s}^{-1} \binom{k+1}{s-t}
\end{equation} 
such that  $\sum_{t=0}^{k-1} (c^{-1})_s^t = 1$.
The operator $d_1^{(0)}$ is given by
\begin{align} \label{d10}
d_1^{(0)}  &= \frac12 \sum_{0\leq s <r < k} \left[ \sum_{t=0}^{k-1} (-1)^{k-t} \binom{k+s-t}{r} h_{(t)l}^{ij}  \right]\theta^r_i \theta_j^{k+s-r} \frac{\partial }{\partial \theta_l^s} .
\end{align}
\end{lemma}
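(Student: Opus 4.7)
The plan is to decompose the expression for $d_1$ from the preceding remark according to the grading $\deg_{\theta^k}$, and then to rewrite the degree-one component by invoking the skew-symmetry relation \eqref{skewh} (extended to $t=k$ through \eqref{handg}) in order to expose the triangular matrix $c_s^t$.

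First I would compute $\deg_{\theta^k}$ of each summand in $d_1$. The generic term contains a factor $\theta_i^r\theta_j^{k+s-r}\partial/\partial\theta_l^s$ whose $\deg_{\theta^k}$ is $[r=k]+[s=r]-[s=k]$, and enumerating the four subcases consistent with $0\leq s\leq r\leq k$ --- namely $s=r=k$, $r=k$ with $s<k$, $s=r<k$, and $s<r<k$ --- shows that this degree is always $0$ or $1$. Since $g^{ij}\theta_j^k\partial/\partial u^i$ has degree one, the decomposition $d_1=d_1^{(1)}+d_1^{(0)}$ follows at once. The component $d_1^{(0)}$ is precisely the contribution from $s<r<k$; the sum over $t$ can be restricted to $0\leq t\leq k-1$ because $\binom{s}{r}=0$ kills the $t=k$ term introduced via \eqref{handg}, and \eqref{d10} drops out.

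For $d_1^{(1)}$, the subcase $s=r=k$ produces the second line of \eqref{d11-1} directly, and the first line comes from $g^{ij}\theta_j^k\partial/\partial u^i$. The main work, which I expect to be the main obstacle, is to combine the two remaining subcases $r=k,\ s<k$ and $s=r<k$, both of which yield terms of the shape $\theta_i^k\theta_j^s\partial/\partial\theta_l^s$ after one reorders the odd generators and relabels $i\leftrightarrow j$ in the second. Applying \eqref{skewh} with $i$ and $j$ swapped (and its trivial $t=k$ extension via \eqref{handg}) to express $h^{ji}_{(t)l}$ as a linear combination of the $h^{ij}_{(b)l}$, and then collapsing the resulting alternating sum over $t$ via the Vandermonde--Chu-type identity $\sum_t (-1)^t \binom{b}{t}\binom{k+s-t}{s}=\binom{k+s-b}{k}$, should show that the combined coefficient of $h^{ij}_{(t)l}\theta_i^k\theta_j^s\partial/\partial\theta_l^s$ equals $(-1)^{k-t}\binom{k+s-t}{k}=(-1)^k\binom{k}{t}^{-1}c_s^t$, yielding the third line of \eqref{d11-1}.

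The stated properties of the matrix $c_s^t$ are purely combinatorial and independent of the rest of the argument. Lower-triangularity is immediate from $\binom{k+s-t}{k}=0$ whenever $s<t$. The row-sum identity $\sum_{t=0}^{k-1} c_s^t=1$ follows by extracting the coefficient of $x^k$ in $\sum_{t=0}^{k}(-1)^t\binom{k}{t}(1+x)^{k+s-t}=x^k(1+x)^s$, after noting that the $t=k$ summand vanishes since $\binom{s}{k}=0$ for $s<k$. The inverse formula is verified by checking $\sum_u c_s^u(c^{-1})_u^t=\delta_s^t$; after substituting $v=u-t$ and setting $m=s-t$, this reduces to the alternating Vandermonde identity $\sum_v(-1)^v\binom{k+m-v}{k}\binom{k+1}{v}=\delta_{m,0}$, and the row-sum $\sum_t(c^{-1})_s^t=1$ is obtained analogously.
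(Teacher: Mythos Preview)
Your proposal is correct and follows essentially the same route as the paper's proof: decompose $d_1$ by $\deg_{\theta^k}$, then use the skew-symmetry constraint~\eqref{skewh} together with a Vandermonde--Chu type identity (the paper's~\eqref{knuth-id}) to extract the coefficients $c_s^t$. The only notable difference is organizational: you start from the compact form of $d_1$ in the remark preceding the lemma, in which the $\partial_l g^{ij}$ contribution is already packaged as the $t=k$ term via~\eqref{handg}, so you arrive at~\eqref{cts} directly; the paper instead starts from~\eqref{d1}, obtains the intermediate expression~\eqref{cts1}, and then performs an extra simplification step to reach~\eqref{cts}. Your bookkeeping is slightly cleaner for this reason, but the mathematical content is the same.
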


\begin{proof}
A straightforward computation gives the homogeneous component of degree zero~\eqref{d10}, while for the homogeneous component of degree one we get:
\begin{align} \label{d11}
d_1^{(1)}  &= g^{ij} \theta^k_j \frac{\partial }{\partial u^i}   
-\frac12 \frac{\partial g^{ij}}{\partial u^l} \theta_j^k  \sum_{s=0}^k \theta_i^s \frac{\partial }{\partial \theta_l^s} \\ \notag
&+\frac12 \sum_{t=0}^{k-1}  (-1)^{k-t}  \binom{2k-t}{k} h_{(t)l}^{ij} \theta_i^k \theta_j^k \frac{\partial }{\partial \theta_l^k} \\ 
&+\frac12 \sum_{0\leq s ,t \leq k-1}  (-1)^{k-t} \left[ \binom{k+s-t}{k} h_{(t)l}^{ij} - \binom{k+s-t}{s} h_{(t)l}^{ji} \right] \theta^k_i \theta_j^{s} \frac{\partial }{\partial \theta_l^s} \notag.
\end{align}

We can substitute~\eqref{skewh} in the third line of ~\eqref{d11}, recalling that $h^{ij}_{(k)l}$ is defined as in~\eqref{handg}, to obtain
\begin{multline}
+\frac12 \sum_{0\leq s ,t \leq k-1}  (-1)^{k-t} \left[ \binom{k+s-t}{k} h_{(t)l}^{ij} \right. \\
\left. - \binom{k+s-t}{s}
 \sum_{r=0}^k (-1)^{r+1} \binom{r}{t} h^{ij}_{(r)l} 
\right] \theta^k_i \theta_j^{s} \frac{\partial }{\partial \theta_l^s} .
\end{multline}
In this expression we can isolate the terms containing $h_{(k)l}^{ij}$, which are equal to
\begin{equation} \label{combi}
-\frac12 \sum_{0\leq s \leq k-1}  \left[ 
\sum_{0\leq t \leq k-1}  
(-1)^{t+1}  \binom{k+s-t}{s}  \binom{k}{t} 
\right] h_{(k)l}^{ij}
 \theta^k_i \theta_j^{s} \frac{\partial }{\partial \theta_l^s} .
 \end{equation}
Thanks to the standard binomial identity (see Equation (5.25) in \cite{knuth})
\begin{equation} \label{knuth-id}
\sum_{k\leq l}(-1)^k\binom{l-k}{m}\binom{s}{k-n} =(-1)^{m+l}\binom{s-m-1}{l-m-n}, 
\end{equation}
we can easily see that 
\begin{equation}
\sum_{0 \leq t \leq k} (-1)^{t+1} \binom{k+s-t}{s} \binom{k}{t} = 0.
\end{equation}
It follows that expression~\eqref{combi} combines with the last term in the first line of~\eqref{d11}, giving a term which can be absorbed by the second line in~\eqref{d11} by extending the sum to $t=k$. 
Notice that such extra term is of the form 
$h^{ij}_{(k)l} \theta^k_i \theta^k_j$, hence it is vanishing for $k$ odd for symmetry considerations. 
We have thus obtained equation~\eqref{d11-1}
with
\begin{equation} \label{cts1}
c_s^t = \frac12 (-1)^t \binom{k}{t} \left[ \binom{k+s-t}{k} 
- \sum_{r=0}^{k-1} (-1)^{r+1} \binom{k+s-r}{s} \binom{t}{r} \right].
\end{equation}
Finally let us prove that these constants can be written as in~\eqref{cts}. Since $t<k$ the sum in~\eqref{cts1} can be extended to $0\leq r \leq k+s$. Then~\eqref{cts} follows from the identity~\eqref{knuth-id} with $k=r$, $l=k+s$, $m=s$, $s=t$ and $n=0$: first we obtain that the sum is equal to $(-1)^k\binom{t-s-1}{k}$, which is equal to the first entry of the bracket in~\eqref{cts1} by the following well-known identity
\begin{equation}
\binom{n}{k}=(-1)^k\binom{k-n-1}{k}.
\end{equation}
The remaining properties of the coefficients $c_s^t$ can be easily proved from the standard identities of the binomial coefficients. 
\end{proof}

Since $d_1$ squares to zero and it decomposes in two homogeneous components in $\deg_{\theta^k}$, we have 
\begin{equation}
\left(d_1^{(1)}\right)^2 = 0, \qquad 
d_1^{(1)} d_1^{(0)} + d_1^{(0)} d_1^{(1)} =0, \qquad  
\left(d_1^{(0)}\right)^2 = 0.
\end{equation}

Setting $\theta_i^k = g_{ij} du^j$, the ring $\hcB$ can be identified with the ring of polynomials in $\theta_i^s$ with $0\leq s<k$ and $i=1, \dots, n$ with coefficients which are exterior forms in the variables $u^i$, namely
\begin{equation}
\hcB \cong \Omega(U) \left[ \{ {\theta}_i^s, \ 0 \leq s \leq k-1, \ 1 \leq i \leq n \}  \right].
\end{equation}
Let us define
\begin{equation} \label{gamma-s-def}
\Gamma_{[s]ij}^l := \sum_{t=0}^{k-1} c_s^t \Gamma_{(t)ij}^l.
\end{equation}
Under the above identification, we have that:
\begin{lemma}
The operator $d_1^{(1)}$ acts on $\hcB$ as 
\begin{equation} \label{d11-2}
d_1^{(1)} = d + \sum_{s=0}^{k-1} \Gamma_{[s]il}^j  du^i \theta_j^s \frac{\partial }{\partial \theta_l^s} 
\end{equation}
where $d$ is the standard exterior derivative in the coordinates $u^i$, $i=1, \dots , n$.
\end{lemma}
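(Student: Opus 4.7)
The plan is to verify the claimed operator identity by testing both sides on generators, since both $d_1^{(1)}$ in~\eqref{d11-1} and the proposed operator $d + \sum_s \Gamma_{[s]il}^j du^i\theta_j^s \partial/\partial\theta_l^s$ are odd derivations of $\hcB$, which we now identify with $\Omega(U)[\theta_i^s : s < k]$ through $\theta_i^k = g_{ij}du^j$. Two derivations coincide iff they agree on a generating set; a convenient choice consists of the coordinates $u^m$, the odd generators $\theta_b^r$ for $0 \leq r < k$, and $\theta_b^k$.

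On $u^m$ only the first term of~\eqref{d11-1} contributes, giving $g^{mj}\theta_j^k = g^{mj}g_{jn} du^n = du^m = d(u^m)$, matching the $d$-part of the RHS. On $\theta_b^r$ with $r < k$ only the third term in~\eqref{d11-1} can act nontrivially, producing $\sum_t (-1)^k \binom{k}{t}^{-1} c_r^t h_{(t)b}^{ij}\theta_i^k\theta_j^r$. Inverting~\eqref{gammadef} to $h_{(t)b}^{ij} = -\binom{k}{t} g^{ii'}\Gamma_{(t)i'b}^j$, and computing $g^{ii'}\theta_i^k = g^{ii'}g_{im}du^m = (-1)^{k+1}du^{i'}$ via the $(-1)^{k+1}$-symmetry of $g^{ij}$, the overall sign $(-1)^k\cdot(-1)\cdot(-1)^{k+1}$ collapses to $+1$. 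The $t$-summation then yields $\Gamma_{[r]i'b}^j du^{i'}\theta_j^r$ by the very definition~\eqref{gamma-s-def} of the Christoffel symbols $\Gamma_{[s]}$. This is the conceptual core of the lemma, as it isolates precisely the linear combinations with coefficients $c_s^t$ that appear in~\eqref{quadconn}.

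Finally, on $\theta_b^k$ the proposed RHS gives $d(g_{bj} du^j) = \partial_m g_{bj}\, du^m\wedge du^j$, while the LHS reduces to the contribution of the second term of~\eqref{d11-1}, namely $\tfrac{1}{2}\sum_t (-1)^{k-t}\binom{2k-t}{k}h_{(t)b}^{ij}\theta_i^k\theta_j^k$. I expect this to be the main obstacle: after identification, matching the antisymmetric $(m,n)$-coefficient of $du^m\wedge du^n$ requires rewriting the $h_{(t)}$'s via the skew-symmetry constraints~\eqref{skewh} (with the convention $h_{(k)l}^{ij} = \partial_l g^{ij}$) so as to reduce everything to a single family of $h$'s, and then exploiting the binomial identity~\eqref{knuth-id} in the same spirit as in the proof of Lemma~\ref{lemma10}. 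The other two checks, by contrast, are essentially immediate from the definitions.
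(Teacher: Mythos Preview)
Your checks on $u^m$ and on $\theta_b^r$ for $r<k$ are correct and coincide with the paper's treatment of the first and third terms in~\eqref{d11-1}. The difference lies in how the action on $\theta_b^k$ is handled. The direct verification you flag as ``the main obstacle'' is entirely bypassed in the paper: since $(d_1^{(1)})^2=0$ has already been established from the $\deg_{\theta^k}$-decomposition of $d_1$, and since $d_1^{(1)} u^i = du^i$, one obtains $d_1^{(1)}(du^i)=0$ for free. The first two terms of~\eqref{d11-1} act only on the subalgebra $\Omega(U)$ and, being a derivation that agrees with $d$ on both $u^i$ and $du^i$, they coincide with $d$ on all of $\Omega(U)$. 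No rewriting via~\eqref{skewh} or further binomial identities is needed here; that work was already absorbed into the derivation of~\eqref{d11-1} in Lemma~\ref{lemma10}.

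Your route would presumably also go through, but it amounts to re-deriving by brute force a consequence of $(d_1^{(1)})^2=0$ that is already available, and you have not actually carried it out. The paper's argument is both shorter and conceptually cleaner: it shows that the identification of the first two terms with $d$ is a formal consequence of the differential property, not a separate combinatorial identity.
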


\begin{proof}
We have that $d_1^{(1)} u^i = g^{il} \theta_l^k = du^i$ and, since $d_1^{(1)}$ squares to zero, $d_1^{(1)} du^i =0$. Therefore the first two terms in~\eqref{d11-1} are simply given by the exterior derivative $d$, while the last term can be rewritten as in~\eqref{d11-2} by substitution of~\eqref{gamma-s-def} and~\eqref{gammadef}.
\end{proof}

For a fixed $s=0, \dots , k-1$, let $E_s$ be the trivial vector bundle over $U$ with fibre the vector space spanned by $\theta_1^s, \dots , \theta_n^s$. The space of $E_s$-valued differential forms $\Omega(U, E_s)$ is identified with the subspace $\hcB_s$ of $\hcB$ given by 
\begin{equation}
\hcB_s := \Omega(U) \langle \theta_1^s, \cdots, \theta_n^s \rangle.
\end{equation}

The operator $d_1^{(1)}$ defines a connection $\nabla^{[s]}$ on the bundle $E_s$, namely a map $\nabla^{[s]}: \Omega^p(U, E_s) \to \Omega^{p+1}(U, E_s)$. Clearly $\nabla^{[s]}$ squares to zero, i.e. it is a flat connection.

This concludes the proof of the main theorem. \hfill\qedsymbol

\section{Poisson brackets of low degree} \label{loworder}

We investigate the consequences of our main theorem in low degree, comparing them with some well-known results in the literature. 

The brackets of degree $k=0$ are also called ultralocal Poisson brackets~\cite{m98} and correspond to Poisson structures on the manifold $M$. They are not relevant to our discussion, so we will not review them here. 

\subsection{Poisson brackets of hydrodynamic type}

The degree $k=1$ Poisson brackets have the local form
\begin{equation} \label{pb-deg-1}
\{ u^i(x) ,  u^j(y) \} = g^{ij} \delta'(x-y) + b^{ij}_l u^l_x \delta(x-y)
\end{equation}
where we assume that $g$ is non-degenerate. The connection $\nabla^{(0)}$ is defined by the Christoffel symbols
\begin{equation}  \label{k1gamma}
 \Gamma^l_{(0)ij} = -g_{i i'} b^{i' l}_j .
\end{equation}

Our main theorem in this case just states that the connection $\nabla^{(0)}$ is flat. As it is well-known from the complete description given by Dubrovin and Novikov in~\cite{dn83}, the bracket~\eqref{pb-deg-1} is Poisson if and only if the connection $\nabla^{(0)}$ is flat, torsionless, and compatible with the metric $g$. Notice that, while the first two conditions are true for $k\geq1$, the compatibility of $\nabla^{(0)}$ with $g$ does not hold in general. However, it can be proved from the skew-symmetry constraint that the connection obtained from $\nabla^{(k-1)}$ by changing the sign of its torsion is compatible with $g$ for $k\geq1$. For $k=1$ this gives the remaining constraint.

\subsection{Poisson brackets of degree $k=2$}

The Poisson brackets of  degree $k=2$ in local coordinates have the form
\begin{equation} \label{pb-deg-2}
\{ u^i(x) ,  u^j(y) \} = g^{ij} \delta''(x-y) + b^{ij}_l u^l_x \delta'(x-y) + ( c^{ij}_l u^l_{xx} + c^{ij}_{lm} u^l_x u^m_x ) \delta(x-y)
\end{equation}
where we assume that the bivector $g$ is non-degenerate, therefore the dimension $n$ of $M$ is even.  
The Christoffel symbols of the standard connections are given by
\begin{equation}
 \Gamma^l_{(0)ij} = -g_{i i'} c^{i' l}_j,\qquad 
\Gamma^l_{(1)ij} = - \frac12 g_{i i'} b^{i' l}_j,
\end{equation}
and $\Gamma_{[1]} = 3 \Gamma_{(0)} - 2 \Gamma_{(1)}$, namely
\begin{equation}
\Gamma^l_{[1]ij} = g_{i i'} (b^{i' l}_j-3 c^{i' l}_j ).
\end{equation}
Our main theorem asserts that the connections $\nabla^{(0)}$ and $\nabla^{[1]}$ are flat. 

Ferguson gives in~\cite{f09} a complete set of equations for the skew-symmetry and Jacobi identity of the Poisson bracket. We recall this result in the following equivalent form.
\begin{theorem}[\cite{f09}] \label{fergu-thm}
For $g$ non-degenerate, formula~\eqref{pb-deg-2} defines a Poisson bracket if and only if 
\begin{enumerate}[(a)]
\item $g^{ij}$ is skew-symmetric,
\item $\nabla^{(0)}$ is flat and torsionless,
\item $\nabla^{(0)}_{i} g_{jl}$ is skew-symmetric in $i,j,l$,
\item $\nabla^{(0)}_l  g^{ij} = b^{ij}_l - 2 c^{ij}_l$,
%\item $\nabla^{(0)}_l  g_{ij} = g_{ji'}  A^{i'}_{il}$,
\item $c^{ij}_{ql} = c^{ij}_{(q,l)} - g_{pr} c^{ri}_{(q} c^{pj}_{l)}$. \end{enumerate}
\end{theorem}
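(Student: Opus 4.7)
The plan is to establish necessity and sufficiency separately, exploiting both the skew-symmetry formula \eqref{skewh} and the spectral-sequence computations of Section~3 that underlie the Main Theorem.

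For the necessary direction, I would first unpack \eqref{skewh} specialized to $k=2$: the leading-order antisymmetry produces (a) immediately, while the $s=1$ case yields $b^{ij}_l-b^{ji}_l=2\,\partial_l g^{ij}$, and the $s=0$ case, after substituting this and invoking the Christoffel formula \eqref{gammadef} for $\nabla^{(0)}$, collapses to (d). The Main Theorem gives the flatness of $\nabla^{[0]}=\nabla^{(0)}$. Torsionlessness of $\nabla^{(0)}$ must be extracted separately as the symmetry of $\Gamma^l_{(0)ij}$ in its lower indices; this is encoded in one component of $d_1^2=0$ on the first page of the spectral sequence, and is the classical Doyle statement recalled in the introduction. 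Together these give (b). For (c), I would combine (d) with the observation from the hydrodynamic subsection that the torsion-reversed $\nabla^{(k-1)}$ is metric-compatible for $k\geq 1$: at $k=2$, this compatibility rewritten in terms of $\nabla^{(0)}$ using the difference tensor $\Gamma_{(0)}-\Gamma_{(1)}$ translates into total skew-symmetry of $\nabla^{(0)}_i g_{jl}$.

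Condition (e) is the one constraint not detected by flatness of the $\nabla^{[s]}$, because the higher-order coefficient $c^{ij}_{lm}$ enters $P$ only through terms of $\deg_u\geq 2$. To obtain it I would compute the $\deg_u=2$, $\deg_{\theta^k}=1$ component of the Jacobi identity $D_P\circ D_P=0$: at this order $c^{ij}_{lm}$ appears algebraically, and the resulting relation expresses it in terms of the symmetrized derivatives $c^{ij}_{(l,m)}$ plus a quadratic correction involving $g_{pr}$ and the $c$'s, which is exactly (e).

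For the sufficient direction, assume (a)--(e). Conditions (a) and (d) reconstruct $b^{ij}_l$ from $g$ and $c^{ij}_l$, (b) and (c) identify $\nabla^{(0)}$ as a flat, torsionless, suitably metric-compatible connection, and (e) fixes $c^{ij}_{lm}$. Substituting back into \eqref{pb-deg-2} one verifies the Jacobi identity order by order in $\deg_u$: the $\deg_u\leq 1$ pieces are controlled by (b)--(d), while the $\deg_u=2$ piece closes thanks to (e). The main obstacle I anticipate is the translation between the abstract cohomological content carried by flatness of $\nabla^{[0]}$ and $\nabla^{[1]}$ and the concrete list (a)--(e); in particular, isolating (e) from the $\deg_u=2$ Jacobi identity, and matching (c) to a metric-compatibility statement for a torsion-reversed connection, both require careful bookkeeping of symmetrizations and binomial identities of the type invoked in the proof of Lemma~\ref{lemma10}.
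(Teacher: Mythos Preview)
The paper does not prove this theorem: it is quoted from Ferguson~\cite{f09} and merely restated ``in the following equivalent form''. There is therefore no paper proof to compare your proposal against; Ferguson's original argument is a direct expansion of the skew-symmetry and Jacobi constraints for~\eqref{pb-deg-2}, not the cohomological route you outline.

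That said, your proposal has a concrete gap even as a self-contained argument. You assert that torsionlessness of $\nabla^{(0)}$ ``is encoded in one component of $d_1^2=0$ on the first page of the spectral sequence'', but the paper explicitly states in the Conclusions that the symmetry of $\nabla^{(0)}$ ``escapes the methods used in this paper''. The operator $d_1^{(1)}$ in~\eqref{d11-2} sees only the curvature of the $\nabla^{[s]}$, not the torsion of $\nabla^{(0)}$; Doyle's proof of symmetry uses a different piece of the Jacobi identity that the filtration by $\deg_u$ pushes off the first page. So invoking $d_1^2=0$ here will not produce the torsion constraint, and you would need to supply a separate argument (or simply cite~\cite{d93}) for that part of~(b). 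Similarly, condition~(e) lives at $\deg_u=2$, which is beyond the portion of $D_P$ actually computed in Section~3; your plan to extract it from the $\deg_u=2$ component of $D_P\circ D_P=0$ is in principle sound, but none of the requisite formulas are available in the paper, so this step amounts to redoing Ferguson's direct computation in disguise. For the sufficient direction your sketch is too thin: ``verifying the Jacobi identity order by order in $\deg_u$'' is precisely the content of the theorem, and the spectral-sequence machinery of Section~3 runs only in the forward direction (Poisson $\Rightarrow$ flatness), so it gives you no leverage there.
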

Notice that the tensor appearing on the right-hand side of (d) is given by the difference of the flat connections
\begin{equation}\label{eq:pfF-1}
b^{ij}_l - 2 c^{ij}_l = g^{ir}(\Gamma^j_{[1]rl}-\Gamma^j_{(0)rl}).
\end{equation}

The flatness of $\nabla^{[1]}$ follows from Ferguson's equations but was previously unnoticed. 
\begin{corollary}
It follows from (a)-(d)  that the connection $\nabla^{[1]}$ is flat. 
\end{corollary}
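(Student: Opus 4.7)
My plan is to identify the connection $\nabla^{[1]}$ with a ``gauge transform'' of $\nabla^{(0)}$ via the non-degenerate bivector $g$, from which flatness follows immediately from (b). The argument proceeds in three steps, using each of (a), (c), and (d) crucially and making no use of (e).

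First, I would use (d), in the form \eqref{eq:pfF-1}, to express the difference tensor $T^l_{ij} := \Gamma^l_{[1]ij} - \Gamma^l_{(0)ij}$ as $T^l_{ij} = g_{ia}\,\nabla^{(0)}_j g^{al}$. The product rule applied to $g_{ia}g^{al} = \delta^l_i$ rewrites this as $T^l_{ij} = -g^{al} \nabla^{(0)}_j g_{ia}$. Using the skew-symmetry of $g^{al}$ from (a) together with the total skew-symmetry of $\nabla^{(0)}_\bullet g_{\bullet\bullet}$ in its three indices given by (c) (which allows the cyclic shift $(j,i,a) \to (i,a,j)$), this becomes the key identity
\[
T^l_{ij} \;=\; g^{la}\, \nabla^{(0)}_i g_{aj}.
\]

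Second, I would observe that the right-hand side is precisely the difference $\tilde\nabla - \nabla^{(0)}$ for the connection $\tilde\nabla$ on $TM$ obtained by conjugating the dual connection $(\nabla^{(0)})^{\ast}$ on $T^{\ast}M$ via the isomorphism $g\colon TM \to T^{\ast}M$, $Y^i \mapsto g_{ij}Y^j$ (well-defined by non-degeneracy and (a)). Indeed, a direct computation of the Christoffel symbols of $\tilde\nabla_X Y := g^{-1}\bigl((\nabla^{(0)})^{\ast}_X (gY)\bigr)$ produces exactly $g^{la}\nabla^{(0)}_i g_{aj}$ as the $(l;ij)$-component of the difference. Hence $\nabla^{[1]} = \tilde\nabla$, and the curvature of $\tilde\nabla$ is the conjugate via $g$ of the curvature of $(\nabla^{(0)})^{\ast}$, which equals (the transpose of) the curvature of $\nabla^{(0)}$; by (b) this vanishes, so $\nabla^{[1]}$ is flat.

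The main obstacle I anticipate is recognizing that the original, asymmetric-looking expression for $T^l_{ij}$---with the ``direction'' index $i$ sitting inside the coefficient $g_{ia}$ and the ``slot'' index $j$ sitting inside the derivative $\nabla^{(0)}_j$---can, thanks to (a) and (c) conspiring together, be repackaged with the direction inside $\nabla^{(0)}_i$ and the slot inside $g_{aj}$, exposing the pullback structure. Condition (e) plays no role, in agreement with the statement of the corollary.
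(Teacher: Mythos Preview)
Your proof is correct and follows essentially the same route as the paper: both use (d) to write the difference tensor as $g_{ia}\nabla^{(0)}_j g^{al}$, then invoke (a) and (c) to recast it as $g^{la}\nabla^{(0)}_i g_{aj}$. The only distinction is in the final step: the paper expands the curvature of $\nabla^{(0)}+A$ directly and cancels to obtain $g^{sl}[\nabla^{(0)}_i,\nabla^{(0)}_j]g_{lt}=0$, whereas you name this computation conceptually by identifying $\nabla^{[1]}$ with the $g$-conjugate of the dual connection $(\nabla^{(0)})^\ast$, so that flatness is inherited by conjugation---a slightly cleaner packaging of the same calculation.
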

\begin{proof}
Notice that we can write $\nabla^{[1]} = \nabla^{(0)} + A$, where $A$ is the tensor obtained from the difference of $\Gamma_{[1]}$ and $\Gamma_{(0)}$ in \eqref{eq:pfF-1}, which from (d) of Theorem~\ref{fergu-thm} reads
\begin{equation}
A^l_{ij}=g_{ir}\nabla^{(0)}_jg^{rl}.
\end{equation}
Using (c), the same tensor can equivalently be written as
\begin{equation}
A^l_{ij} = g^{ls} \nabla^{(0)}_{i} g_{sj}.
\end{equation}
By the flatness of $\nabla^{(0)}$, we can compute the Riemann curvature of $\nabla^{[1]}$ as
\begin{align}
[\nabla^{[1]}_i,\nabla^{[1]}_j]^s_t &= \nabla^{(0)}_i A^{s}_{jt} + A^s_{iq} A^q_{jt} - ( i \leftrightarrow j ) \notag \\
&=\nabla_i^{(0)}\left(g^{sl}\nabla_j^{(0)}g_{lt}\right)+g^{sp}\nabla^{(0)}_ig_{pq}g^{ql}\nabla^{(0)}_jg_{lt}-( i \leftrightarrow j ) \notag\\
&= g^{sl} \nabla^{(0)}_i \nabla^{(0)}_j g_{lt} - ( i \leftrightarrow j ) \notag \\
&= g^{sl} [\nabla^{(0)}_i, \nabla^{(0)}_j] g_{lt} = 0. \notag
\end{align}
\end{proof}

\begin{remark}
The connections are in general position if $\nabla^{[1]}$ and $\nabla^{(0)}$ do not coincide, equivalently if $b^{ij}_l \not= 2 c^{ij}_l$. The degenerate case $\nabla^{[1]}=\nabla^{(0)}$, namely $b^{ij}_l = 2 c^{ij}_l$, has been also considered in~\cite{f08} and corresponds to the case of covariantly constant $g$, which in turn is equivalent to the fact that $g$ defines a Poisson tensor on $M$. 
\end{remark}

\begin{remark}
As it follows immediately from Ferguson's equations, in the flat coordinates for $\nabla^{(0)}$ the Poisson operator takes the canonical form 
\begin{equation}
P^{ij} = \partial g^{ij} \partial
\end{equation}
and, equivalently, the Poisson bracket is
\begin{equation} 
\{ u^i(x) ,  u^j(y) \} = g^{ij} \delta''(x-y) + \partial_l g^{ij} u^l_x \delta'(x-y) 
\end{equation}
where $\partial_i g_{jk}$ is skew-symmetric in $i, j, k$, which in turn implies that $g_{jk}$ is linear in the flat coordinates $u$. 
\end{remark}

\begin{remark} \label{nonflat1}
In flat coordinates the Christoffel symbols $\Gamma^j_{[1]rl}$ and $\Gamma^j_{(0)rl}$ differ by a factor $-1/2$, and since we know that $\nabla^{[1]}$ is flat, as long as $g$ is not constant, $\nabla^{(1)}$ cannot be flat. 
\end{remark}

\subsection{Poisson brackets of degree $k=3$}

The Poisson bracket in this case is usually written in local coordinates as
\begin{multline} \label{poisson3}
\{ u^i(x) ,  u^j(y) \} = g^{ij} \delta'''(x-y) + b^{ij}_l u^l_x \delta''(x-y) + ( c^{ij}_l u^l_{xx} + c^{ij}_{lm} u^l_x u^m_x ) \delta'(x-y) \\
+(d^{ij}_l u^l_{xxx} + d^{ij}_{lm} u^l_{xx} u^m_x + d^{ij}_{lmn} u^l_x u^m_x u^n_x ) \delta(x-y).
\end{multline}
The non-degenerate matrix $g^{ij}$ is symmetric and transforms as a contravariant tensor. 
The Christoffel symbols of the standard connections are given by
\begin{equation}
 \Gamma^l_{(0)ij} = -g_{i i'} d^{i' l}_j,\qquad 
\Gamma^l_{(1)ij} = - \frac13 g_{i i'}  c^{i' l}_j, \qquad 
\Gamma^l_{(2)ij} = - \frac13 g_{i i'} b^{i' l}_j. 
\end{equation}

Our main theorem implies that the connections defined by the Christoffel symbols 
\begin{equation}
\Gamma^l_{[1]ij} = g_{i i'} ( c^{i' l}_j -4 d^{i' l}_j), \qquad 
\Gamma^l_{[2]ij} = g_{i i'} (-b^{i' l}_j+4 c^{i' l}_j -10 d^{i' l}_j),
\end{equation}
and the connection $\nabla^{(0)}$, are flat. 

The full set of constraints coming from skew-symmetry and Jacobi identity written in an arbitrary coordinate system is not available in the literature. However, according to Potëmin~\cite{p97}, in flat coordinates for the connection $\nabla^{(0)}$, the Poisson operator has the form
\begin{equation} \label{Pk3}
P^{ij} = \partial \left( g^{ij} \partial + c^{ij}_l u^{l}_x \right) \partial
\end{equation}
and the full set of constraints reduces to the following equations
\begin{subequations}\label{potemineq}
\begin{align}  
g^{ij}_{,l} &= c^{ij}_l + c^{ji}_l,\\
g^{is}c^{jl}_s&=-g^{js}c^{il}_s,\\
0&=g^{is}c^{jl}_s+g^{js}c^{li}_s+g^{ls}c^{ij}_s,\\
g^{ls}c^{ij}_{s,m}&=c^{il}_sc^{sj}_m-c^{li}_sc^{sj}_m-c^{lj}_sg^{si}_{,m}.
\end{align}
\end{subequations}

We can read the coefficients in~\eqref{poisson3} by expanding~\eqref{Pk3} and comparing the two expressions. Besides $g^{ij}$ and $c^{ij}_l$ which represent the same objects in the two formulas, we have
\begin{equation}
b^{ij}_l = \frac{\partial g^{ij}}{\partial u^l} + c^{ij}_l, \qquad 
c^{ij}_{lm} = c^{ij}_{(l,m)}, 
\end{equation}
while all the coefficients $d^{ij}_{l}$, $d^{ij}_{lm}$ and $d^{ij}_{lmp}$ vanish.

The Christoffel symbols of the flat connections $\nabla^{[1]}$ and $\nabla^{[2]}$ are
\begin{equation}
\Gamma_{[1]ij}^l = g_{is} c^{sl}_j, \qquad 
\Gamma_{[2]ij}^l =   2 g_{is} c^{sl}_j- g_{is} c^{ls}_j.
\end{equation}
The flatness of the connection $\nabla^{[1]}$ was proved by Balandin and Potëmin~\cite{bp01}. The flatness of the connection $\nabla^{[2]}$ was so far unnoticed, but it can be directly verified in these coordinates by quite long but straightforward computations using equations~\eqref{potemineq}.

\begin{remark}
The connections are in general position if both $b^{ij}_l$ and $c^{ij}_l$ are non-zero and not proportional to each other, equivalently if $\nabla^{[1]}$ and $\nabla^{[2]}$ are not proportional and both non-zero. The non-generic cases are either the case of $\nabla^{[1]}=\nabla^{[2]}$ non-vanishing, which corresponds to $c^{ij}_l$ symmetric in the upper indices, or the completely degenerate case, which corresponds to vanishing connections and constant $g^{ij}$.
\end{remark}

\begin{remark} \label{nonflat2}
Let us consider the degree $k=3$ Poisson bracket in $n=2$ dimensions considered in Theorem 1 of~\cite{fpv14} which is given by
\begin{equation}
(g^{ij}) = \begin{pmatrix}
1 & \frac{u^2}{u^1}  \\  \frac{u^2}{u^1} & \frac{1 + (u^2)^2}{(u^1)^2} 
\end{pmatrix}, \quad 
(c_1^{ij}) = \begin{pmatrix}
0 & -\frac{u^2}{(u^1)^2} \\ 0 & - \frac{1 + (u^2)^2}{(u^1)^3}  
\end{pmatrix}, \quad 
(c_2^{ij}) = \begin{pmatrix}
0 & \frac{1}{u^1} \\ 0 & \frac{u^2}{(u^1)^2}  
\end{pmatrix}.  \notag
\end{equation}
Computing the curvature tensors of the connections $\nabla^{(1)}$ and $\nabla^{(2)}$ we obtain 
\begin{align}
R^{(1) 2}_{2,1,1}  &= - R^{(1) 2}_{1,2,1} = \frac{4}{9 (u^1)^2} , \\
R^{(2) 1}_{1,2,1}  &= - R^{(2) 1}_{2,1,1} = R^{(2) 2}_{2,1,2}  = - R^{(2) 2}_{1,2,2}= \frac{8 u^2}{9 u^1} \\
R^{(2) 2}_{1,2,1}  &= - R^{(2) 2}_{2,1,1} =  \frac{4 (2 (u^2)^2 -3)}{9 (u^1)^2}, \\
R^{(2) 1}_{2,1,2} &= - R^{(2) 1}_{1,2,2}  = \frac{8}{9}.
\end{align}
while all the other components are vanishing. This shows that in general the standard connections $\nabla^{(1)}$ and $\nabla^{(2)}$ have non-vanishing curvature. Notice that in these coordinates the Christoffel symbols $\Gamma^l_{[1]ij}$ are proportional to $\Gamma^l_{(1)ij}$: one can indeed check that, by rescaling the latter in the previous computation by the coefficient $-1/3$, the curvature vanishes. 
\end{remark}

\subsection{Poisson brackets of degree $k=4$}

The form of the Poisson bracket in local coordinates is 
\begin{multline}
\{ u^i(x) ,  u^j(y) \} = g^{ij} \delta''''(x-y) + b^{ij}_l u^l_x \delta'''(x-y) + ( c^{ij}_l u^l_{xx} + c^{ij}_{lm} u^l_x u^m_x ) \delta''(x-y) \\
+(d^{ij}_l u^l_{xxx} + d^{ij}_{lm} u^l_{xx} u^m_x + d^{ij}_{lmp} u^l_x u^m_x u^p_x ) \delta'(x-y) \\
+(e^{ij}_{l} u^l_{xxxx} + e^{ij}_{lm} u^l_{xxx} u^m_x + \hat{e}^{ij}_{lm} u^l_{xx} u^m_{xx} + e^{ij}_{lmp} u^l_{xx} u^m_x u^p_x + e^{ij}_{lmpq} u^l_{x} u^m_x u^p_x u^q_x
) \delta(x-y) .
\end{multline}
The matrix $g^{ij}$ is skew-symmetric and defines a bivector on the manifold $M$. We assume that $g^{ij}$ is non-degenerate, and therefore that the dimension $n$ of $M$ is even. 

The Christoffel symbols of the standard connections are given by
\begin{align}
 \Gamma^l_{(0)ij} &= -g_{i i'} e^{i' l}_j,\quad 
&\Gamma^l_{(1)ij} &= - \frac14 g_{i i'}  d^{i' l}_j, \\ 
\Gamma^l_{(2)ij} &= - \frac16 g_{i i'} c^{i' l}_j, \quad 
&\Gamma^l_{(3)ij} &= - \frac14 g_{i i'} b^{i' l}_j. 
\end{align}
Our main theorem implies that the connections defined by the Christoffel symbols 
\begin{align}
\Gamma^l_{[1]ij} &= g_{i i'} ( d^{i' l}_j -5 e^{i' l}_j), \\ 
\Gamma^l_{[2]ij} &= g_{i i'} (-c^{i' l}_j+5 d^{i' l}_j -15 e^{i' l}_j), \\ 
\Gamma^l_{[3]ij} &= g_{i i'} (b^{i' l}_j-5 c^{i' l}_j +15 d^{i' l}_j -35 e^{i' l}_j),
\end{align}
are flat. 

We are not aware of any result in the literature about the homogeneous Poisson brackets of degree $k=4$.

\section{Conclusions}

The aim of this work is to address the general problem of the study of the structure and deformations of homogeneous local Poisson brackets of arbitrary degree as geometric objects on a manifold $M$, as originally posed by Dubrovin and Novikov. 
By leveraging homological algebra techniques, we set the stage for a systematic investigation of the structure and cohomology of such brackets, with the aim of generalising well-known results in low degree (for the hydrodynamic case $k=1$, where these problems are completely solved, at least in the case of invertible $g$, see~\cite{dn83, dz01, g02, dms02}).

Our main result is the proof that specific linear combinations of the $k$ standard connections associated with a homogeneous Poisson bracket of degree $k$ have vanishing curvature. We observe moreover how this was mostly unnoticed in the previous studies of low degree brackets. 

This result appears as a side product of the first steps in the computation of the Poisson cohomology of the homogeneous Poisson brackets. Indeed, we plan to extend this analysis in a subsequent work, where we compute the Poisson cohomology of a homogeneous Poisson bracket $P$ in terms of the Chevalley-Eilenberg cohomology of a Lie algebra naturally associated with $P$. However, a more direct proof of the flatness of the connections $\nabla^{[s]}$ remains an intriguing challenge. Such a proof could possibly shed light on the geometric origins of the coefficients defining the flat connections.

We deem that achieving a complete geometric characterization of the constraints imposed by skew-symmetry and the Jacobi identity could significantly advance the classification of homogeneous Poisson brackets. There are some further directions of study of the structure problem that could be pursued.
The skew-symmetry constraint of the Poisson bracket has been used in this work (see proof of Lemma~\ref{lemma10}) but its full interpretation is lacking. Other properties, like the symmetry of $\nabla^{(0)}$, are known~\cite{d93} to hold for arbitrary degree but their proof escapes the methods used in this paper. The nature of ``deeper'' coefficients, those appearing at higher degree $\deg_u$ in the brackets, is still not clear. 
Finally, relaxing the assumption of invertibility of $g$, especially for even degrees $k$, could expand the scope of the theory to include degenerate cases, potentially uncovering new structural phenomena.

\end{document}